\newtheorem{theorem}{Theorem}[section]
\newtheorem*{maint}{{\textbf{Theorem}}}
\newtheorem{lemma}[theorem]{Lemma}
\newtheorem{corollary}[theorem]{Corollary}
\newtheorem{proposition}[theorem]{Proposition}
\theoremstyle{remark}
\newtheorem{remark}[theorem]{Remark}
\newtheorem*{remark*}{Remark}
\theoremstyle{definition}
\newtheorem{definition}{Definition}
\newtheorem{problem}{Problem}
\numberwithin{equation}{section}
\newcommand{\vertiii}[1]{{\left\vert\kern-0.25ex\left\vert\kern-0.25ex\left\vert #1 
		\right\vert\kern-0.25ex\right\vert\kern-0.25ex\right\vert}}
\newcounter{smallromans}
\newcounter{smallromansdash}
\newcounter{bigromans} 
	{\end{list}}
\title[Inverse problems for symmetric doubly stochastic matrices]{Inverse problems for symmetric doubly stochastic matrices whose Sule\u{\i}manova spectra\\ are bounded below by 1/2}
\dedicatory{In memoriam: Miroslav Fiedler (1926--2015)} 
\author{Micha{\l} Gnacik}
\address{School of Mathematics and Physics, Lion Gate Building, Lion Terrace, University of Portsmouth, Portsmouth, PO1 3HF, United Kingdom}
\email{michal.gnacik@port.ac.uk}
\author{Tomasz Kania}
\address{Institute of Mathematics, Czech Academy of Sciences, \v{Z}itn\'{a} 25, 115~67 Prague 1, Czech Republic and Institute of Mathematics, Jagiellonian University, {\L}ojasiewicza 6, 30-348 Krak\'{o}w, Poland}
\date{\today}
\email{kania@math.cas.cz, tomasz.marcin.kania@gmail.com}
\date{\today}
\subjclass[2010]{65F18 (primary), and  15A18, 15A12 (secondary).}
\keywords{doubly stochastic matrix, bistochastic matrix, inverse problem, SDIEP, Sule\u{\i}manova spectrum}
\begin{document}
	\begin{abstract}
		A new sufficient condition for a list of real numbers to be the spectrum of a symmetric doubly stochastic matrix is presented; this is a contribution to the classical spectral inverse problem for symmetric doubly stochastic matrices that is still open in its full generality. It is proved that whenever $\lambda_2, \ldots, \lambda_n$ are non-positive real numbers with $1 + \lambda_2 + \ldots + \lambda_n \geqslant 1/2$, then there exists a symmetric, doubly stochastic matrix whose spectrum is precisely $(1, \lambda_2, \ldots, \lambda_n)$. We point out that this criterion is incomparable to the classical sufficient conditions due to Perfect--Mirsky, Soules, and their modern refinements due to Nader \emph{et al.} We also provide some examples and applications of our results.  
	\end{abstract}
	\maketitle
	
	\section{Introduction}
	A square matrix with real entries is termed \emph{stochastic}, when it has all entries non-negative and each row adds up to 1. Stochastic matrices are conveniently interpreted as transition matrices of finite-state Markov chains (hence the terminology). The aim of this note is to consider the inverse eigenvalue problem for doubly stochastic matrices (also called \emph{bistochastic} in the literature; a stochastic matrix is \emph{doubly stochastic} if its transpose is stochastic too). Doubly stochastic matrices may be interpreted as transition matrices of finite-state symmetric Markov chains. Permutation matrices are a paradigm example of a class of doubly stochastic matrices; according to the Birkhoff–-von Neumann theorem, the set of $n\times n$ doubly stochastic matrices is the convex hull of the set of permutation matrices of an $n$-element set.\smallskip
	
	Inverse eigenvalue problems for classes of matrices such as (symmetric or not) matrices with non-negative entries, stochastic, or doubly stochastic are well-rooted in the literature, having their origin in the works of Sule\u{\i}manova \cite{Sul1, Sul2} and, independently, Perfect (\cite{Pe52, Pe53}) with an important subsequent continuation by Perfect and Mirsky \cite{PerfM}. Recently, the problem has gained new impetus as reflected by a plethora of new sufficient conditions (\cite{ Ccs,HwP, JoPa, Lei, MAMGK, nader, XuLei}). We refer to Mourad's paper \cite{Mourad} for a good overview concerning the said problems.\smallskip
	
	More explicitly, the symmetric doubly stochastic eigenvalue inverse problem (SDIEP) asks the following: \smallskip
	\begin{center}\emph{Let $\lambda_1, \lambda_2, \ldots, \lambda_n$ be a list of real numbers. In what circumstances does there exist a~symmetric doubly stochastic matrix whose spectrum consists of these numbers? }\end{center}\smallskip
	
	Quite naturally, the inverse problems for other classes of matrices are formulated totally analogously. In this short note, we focus only on providing a new sufficient condition to address SDIEP. \smallskip
	
	Since any matrix $A$ solving the above problem has non-negative entries, by the classical Frobenius--Perron theorem, $A$ must have a
	non-negative eigenvalue $\lambda_1$ (that is called \emph{the Perron eigenvalue} of $A$) such that $\lambda_1 \geqslant |\lambda|$ for any other eigenvalue $\lambda$ of $A$ (and the eigenvector associated to $\lambda_1$ has non-negative entries, it is also known as \emph{the Perron eigenvector}). It is to be noted that already for stochastic matrices, $\lambda_1=1$ is the Perron eigenvalue to which corresponds the eigenvector comprising only $1$s. Consequently, without loss of generality we shall restrict ourselves to $\lambda$s from the interval $[-1,1]$.\smallskip
	
	Since the trace of (any power of) a square matrix with non-negative entries is non-negative, for a list $(1, \lambda_2, \ldots, \lambda_n)$ of real numbers to form a spectrum of a~solution to SDIEP, it is necessary that $1 + \lambda_2^k + \ldots + \lambda_n^k \geqslant 0$ for any $k\in \mathbb N$. Perfect and Mirsky \cite{PerfM} provided a~fairly general sufficient condition for the possibility of solving SDIEP for a list of real numbers $1=\lambda_1 \geqslant \lambda_2 \geqslant \ldots \geqslant -1$. Namely, they proved that as long as
	\begin{equation}\label{eqn: permir} \frac{1}{n} +\frac{1}{n(n-1)}\lambda_2 + \ldots + \frac{1}{2\cdot 1}\lambda_{n} \geqslant 0,\end{equation}
	there exists a symmetric doubly stochastic $n\times n$ matrix whose spectrum is equal to $\{1, \lambda_2, \ldots, \lambda_n\}$. This condition was subsequently refined by Soules \cite{Soules}, who gave a finer condition depending on the remainder of $n$ when divided by 2. More specifically, let $m$ be such that $n=2m +1$ in the case $n$ is odd and $n=2m+2$ in the case where $n$ is even. If
	\begin{equation}\label{eqn: soules}\frac{1}{n} +\frac{n-m-1}{n(m+1)}\lambda_2  + \sum_{k=1}^n\frac{1}{(k+1)k}\lambda_{n-2k+2} \geqslant 0,\end{equation}
	then there exists a symmetric doubly stochastic $n\times n$ matrix whose spectrum coincides with $\{1, \lambda_2, \ldots, \lambda_n\}$.\smallskip
	
	Soules' condition was refined further by Nader \emph{et al.}~ who arrived at~a more complicated condition that depends on the remainder $n \! \mod 4$ (\cite[Theorem 5]{nader}) that covers a large class of cases.\smallskip
	
	Following the terminology introduced by Paparella \cite{Pap}, we call a list of real numbers $\sigma = (\lambda_1 = 1, \lambda_2, \ldots, \lambda_n)$  a \emph{Sule\u{\i}manova spectrum}, whenever $\lambda_j\leqslant 0$ for $j=2, \ldots, n$ and $1 + \lambda_2 + \ldots + \lambda_n \geqslant 0$. Moreover, we say that Sule\u{\i}manova spectrum is \emph{normalised} if  $\lambda_2 \geqslant \cdots \geqslant \lambda_n.$
	
	\smallskip
	
	Fiedler (\cite[Theorem 2.4]{Fiedler}) showed that normalised Sule\u{\i}manova spectra are realisable by symmetric non-negative matrices. Soto and Ccapa (\cite[Theorem 3.3]{Soho}) proved that as long as $\lambda_j$ < 0 for $j=2, \ldots, n$ every normalised Sule\u{\i}manova spectrum is realisable as a~spectrum of a stochastic matrix (not necessarily symmetric). \smallskip
	
	Johnson and Paparella (\cite[Problem 6.2. and Theorem 6.3.]{JoPa}) proved, among other things, that every normalised Sule\u{\i}manova spectrum is realisable as a spectrum of a symmetric doubly stochastic matrix for all Hadamard orders (recall that the order of a Hadamard matrix must be $1$, $2$ or a multiple of $4$). In particular, when $n=2^k$ for some $k$, the resulting matrix $A$ is trisymmetric (\cite[Corollary 6.5]{JoPa}), that is, it satisfies any two of the following three conditions: \emph{symmetric}, \emph{persymmetric} ($AK = KA^T$), or \emph{centrosymmetric} ($AK = KA$), 
	where $K$ is the exchange matrix (backward identity); satisfying any two of the above three conditions always implies that the remaining third condition holds. \smallskip
	
	It is to be noted that a normalised Sule\u{\i}manova spectrum with $1 + \lambda_2 + \ldots + \lambda_n = 0$ may fail to be realisable within the class of symmetric doubly stochastic matrices. Indeed, if $n$ is an odd number, \emph{e.g.}, the list $(1, 0, 0, \ldots, 0, -1)$ cannot be a~spectrum of any symmetric doubly stochastic matrix  (\cite[Corollary 1]{nader}).\smallskip
	
	For the sake of brevity, we introduce the following piece of terminology.
	\begin{definition}\label{def:sul}
		A list $\sigma=(1,\lambda_2, \ldots, \lambda_n)$ is called a $\delta$-\emph{Sule\u{\i}manova spectrum} ($\delta > 0$), whenever
		\begin{equation}\label{eqn: suleimanova}1+ \lambda_2 + \ldots + \lambda_n \geqslant \delta \end{equation}
		and $\lambda_j\leqslant 0$ for $j=2, \ldots, n$.\smallskip
	\end{definition}
	
	The main result of this note is to prove that $1/2$-Sule\u{\i}manova spectra may be indeed realised as spectra of symmetric, doubly stochastic matrices. Note that we drop the assumption for the spectrum to be normalised.\smallskip
	
	\begin{maint}Let $\sigma = ( 1, \lambda_2, \ldots, \lambda_n)$ be a $1/2$-Sule\u{\i}manova spectrum. Then there exists a~symmetric, doubly stochastic matrix whose spectrum is precisely $\sigma$.\end{maint}

	We discern that the idea for the proof has its origins in the theory of Markov chains.
	Roughly speaking, we consider a simple (symmetric) random walk on the discrete torus $\mathbb Z / n\mathbb Z$ ($n \in \mathbb{N}$) represented by the following graph that we denote by $S_n$:
	\begin{center}
		\begin{tikzpicture}[scale=1.0, transform shape, >=stealth, node distance=2cm, very thick]
		\tikzstyle{nodeStyle} = [draw,fill,shape=circle, ,circle,inner sep=0pt,minimum size=4pt]
		\node[nodeStyle] (0)              {};
		\node[nodeStyle] (1) [below right of=0] {};
		\node[nodeStyle] (2) [right of=1] {};
		\node[nodeStyle] (3) [above right of=2] {};
		\node[nodeStyle] (4) [above left of=3] {};
		\node[nodeStyle] (5) [left of=4] {};
		\node[below] at (0.south) {$0$\ \ \ };
		\node[below] at (1.south) {$1$\ \ \ };
		\node[below] at (2.south) {$2$};
		\node[right] at (3.south) {$\ldots$};
		\node[above right] at (4.south) {$n-2$};
		\node[above left] at (5.south) {$n-1$};
		\path 
		(1) edge [->, bend right] node[below] {$\frac{1}{2}$} (2)
		(0) edge [->, bend right] node[below] {$\frac{1}{2}$} (1)
		(1) edge [->, bend right] node[above] {$\frac{1}{2}$} (0)
		(2) edge [->, bend right] node[above] {$\frac{1}{2}$} (1)
		(3) edge [->, bend right] node[above] {$\frac{1}{2}$} (2)
		(2) edge [->, bend right] node[below] {$\frac{1}{2}$} (3)
		(3) edge [->, bend right]  (4)
		(4) edge [->, bend right]  (3)
		(4) edge [->, bend right]  (5)
		(5) edge [->, bend right]  (4)
		(5) edge [->, bend right] (0)
		(0) edge [->, bend right] (5)
		(5) edge [->, bend right] (0);
		\end{tikzpicture}
	\end{center} 
	
	The crux of the proof is to use the eigenvectors of the transition probability matrix $P_n$ of the above Markov chain as a \emph{generator} for a class of new symmetric, doubly stochastic matrices. More specifically, we construct the desired matrices as Schur forms of the type $Q\Lambda Q^T$, where the column vectors of $Q$ are suitably chosen eigenvectors of $P_n$ and $\Lambda$ is a~diagonal matrix containing the Sule\u{\i}manova spectrum.  
	This very idea is not specific to $Q$, however it exploits the properties of $Q$, that is, symmetry and orthogonality. Such approach offers a room for improvement---we address this further in Remark \ref{rem: household}.  \smallskip
	
	By appealing to this heuristics and backed with some numerical evidence, we raise the following problem.
	
	\begin{problem}\label{problem} Let $\delta > 0$. Can every $\delta$-Sule\u{\i}manova spectrum be realised as the spectrum of a symmetric doubly stochastic matrix?
	\end{problem}
	The main result of the note asserts that the answer is positive for $\delta \geqslant 1/2$.
	
	\section{Proof of the main result}
	We consider the symmetric random walk on the graph $S_n$ described in the Introduction. The corresponding transition probability matrix $P_n$ is then given by 
	$$
	P_n  = \begin{bmatrix} 
	0 &  \frac{1}{2}& 0&0&\cdots &0&0& \frac{1}{2}\\
	\frac{1}{2} &0&   \frac{1}{2} & 0&\cdots &0&0& 0 \\
	&\ddots& \ddots & \ddots  & & & \\
	0 &  0& 0&0&\cdots &\frac{1}{2} &0& \frac{1}{2}\\
	\frac{1}{2} &  0& 0&0 &\cdots &0 &\frac{1}{2} &0\\
	\end{bmatrix}.
	$$
	Note that $P_n$ is a symmetric circulant matrix. \smallskip
	
	\noindent \emph{Notation.}
	In this section, we index the eigenvalues to start from $0$ rather than $1$ so that
	$\sigma = (\lambda_0, \lambda_1, \ldots, \lambda_{n-1})$ rather than 
	$\sigma = (\lambda_1, \lambda_2, \ldots, \lambda_{n})$ as done in the previous section. This assures that the formulae in this section are clearer and more compact. \bigskip
	
	A rote calculation shows that the eigenvalues $\lambda_k$ and the corresponding (real) eigenvectors $u_k$ ($k\in \{0, \ldots, n-1\}$) of $P_n$  are given by
	\begin{equation}\label{wartosciwlasne}
	\lambda_k = \cos\Big(\frac{2\pi k}{n} \Big),\qquad  u_k^{(j)}= \cos\Big( \frac{2\pi kj}{n} \Big) \qquad (k, j \in \{0, \ldots, n-1\});
	\end{equation}
	we prove it, among other things, in Lemma \ref{wk}. Due to symmetry, we have $\lambda_k = \lambda_{n-k}$ and $u_k = u_{n-k}$, so the eigenvectors $u_k$ ($k\in \{0, \ldots, n-1\}$) fail to span $\mathbb{R}^n$; in the next lemma we also find another set of eigenvectors corresponding to $P_n$ that do indeed form an orthonormal basis of $\mathbb{R}^n$. For an elaborate discussion on eigenvalues of transition matrices we refer the reader to  \cite[chapter 12]{Levin}.

	\begin{lemma}\label{wk} \enskip
		\begin{itemize}
			\item[i)] The eigenvalues $\lambda_k$ and the corresponding eigenvectors  $u_k$ ($k\in \{0, \ldots, n-1\}$) of $P_n$ are given by \emph{(\ref{wartosciwlasne})}.
			\item[ii)] Set
			$$	 w_k^{(j)}= \sqrt{\frac{2}{n}} \sin\left( \frac{2\pi kj}{n}+\frac{\pi}{4} \right) \qquad (k, j \in \{0, \ldots, n-1\}).$$
			Then $w_k = [w_k^{(j)}]_{0\leqslant j\leqslant n-1}$ are eigenvectors of $P_n$ corresponding to $\lambda_0, \ldots, \lambda_{n-1}$. 
			Furthermore, these eigenvectors form an orthonormal basis of $\mathbb{R}^n$. 
		\end{itemize}
	\end{lemma}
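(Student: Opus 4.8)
The plan is to verify both items by a single direct computation that exploits the band structure of $P_n$: its $j$-th row has entries $\tfrac12$ in columns $j-1$ and $j+1$ with indices read modulo $n$, so that for every vector $v=[v^{(j)}]_{0\leqslant j\leqslant n-1}$ one has $(P_nv)^{(j)}=\tfrac12\bigl(v^{(j-1)}+v^{(j+1)}\bigr)$. For (i), apply this to $u_k$ and invoke the product-to-sum identity $\cos(x-y)+\cos(x+y)=2\cos x\cos y$ with $x=\tfrac{2\pi kj}{n}$ and $y=\tfrac{2\pi k}{n}$; this yields $(P_nu_k)^{(j)}=\cos\bigl(\tfrac{2\pi k}{n}\bigr)u_k^{(j)}=\lambda_k u_k^{(j)}$, so each $u_k$ is an eigenvector for $\lambda_k$. (Alternatively one may quote the standard description of the spectrum of a circulant matrix: $P_n$ is the circulant matrix with symbol $p(z)=\tfrac12 z+\tfrac12 z^{n-1}$, whose eigenvalues are $p(\omega^k)=\tfrac12(\omega^k+\omega^{-k})=\cos(2\pi k/n)$ for $\omega=e^{2\pi i/n}$, with complex eigenvector $(\omega^{kj})_j$; passing to real parts gives $u_k$.)

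For (ii), the same recurrence together with $\sin(x-y)+\sin(x+y)=2\sin x\cos y$, now with $x=\tfrac{2\pi kj}{n}+\tfrac{\pi}{4}$ and $y=\tfrac{2\pi k}{n}$, gives $(P_nw_k)^{(j)}=\cos\bigl(\tfrac{2\pi k}{n}\bigr)w_k^{(j)}=\lambda_k w_k^{(j)}$; note that the phase $\pi/4$ plays no role here. It remains to establish orthonormality. Using $\sin\alpha\sin\beta=\tfrac12[\cos(\alpha-\beta)-\cos(\alpha+\beta)]$ and $\cos(\theta+\tfrac{\pi}{2})=-\sin\theta$ one obtains
\[
\langle w_k,w_l\rangle=\frac1n\sum_{j=0}^{n-1}\cos\Bigl(\tfrac{2\pi(k-l)j}{n}\Bigr)+\frac1n\sum_{j=0}^{n-1}\sin\Bigl(\tfrac{2\pi(k+l)j}{n}\Bigr).
\]
By the orthogonality relations for $n$-th roots of unity, $\sum_{j=0}^{n-1}e^{2\pi i mj/n}$ equals $n$ when $n\mid m$ and $0$ otherwise; hence the first sum equals $n$ if $k=l$ and $0$ if $k\neq l$ (as then $k-l\not\equiv0\pmod n$), while the second sum vanishes for all $k,l$ — when $k+l\not\equiv0\pmod n$ it is the imaginary part of such a vanishing sum, and when $k+l\equiv0\pmod n$ every summand is the sine of an integer multiple of $2\pi$. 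Therefore $\langle w_k,w_l\rangle=\delta_{kl}$, and since these are $n$ pairwise orthogonal unit vectors in $\mathbb{R}^n$, they form an orthonormal basis.

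I do not anticipate a genuine obstacle: the argument is elementary trigonometry plus the root-of-unity orthogonality relations, and the only care needed is the bookkeeping of indices modulo $n$. The one point worth flagging is conceptual rather than technical, and I would add a sentence making it explicit: the phase shift $\pi/4$ is precisely what repairs the degeneracy observed for the $u_k$. Without it, $\sin(2\pi kj/n)$ and $\sin(2\pi(n-k)j/n)$ would differ only by sign and the $k=0$ vector would be zero, so the plain sines fail to span; inserting $\pi/4$ makes $w_k$ and $w_{n-k}$ linearly independent and gives $w_0^{(j)}=\sqrt{2/n}\,\sin(\pi/4)=1/\sqrt{n}\neq0$, the normalised Perron vector, which is exactly the feature we shall need when assembling $Q$.
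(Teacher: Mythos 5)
Your proof is correct and follows essentially the same route as the paper: the orthonormality computation (product-to-sum identity, the $\pi/4$ phase turning the cross term into a sine, and the root-of-unity sums) is identical, and your direct verification of the eigenvector property via the three-term recurrence $(P_nv)^{(j)}=\tfrac12(v^{(j-1)}+v^{(j+1)})$ is just a hands-on substitute for the paper's derivation from the circulant/Fourier diagonalisation, which you also mention. If anything, your explicit treatment of the degenerate case $k+l\equiv 0\pmod n$ is slightly more careful than the paper's geometric-series formula, which tacitly assumes $n\nmid m$.
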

	
	\begin{proof} \enskip 
		\begin{itemize}
			\item [i)]
			Since $P_n$ is a circulant matrix, it is diagonalisable by a  Fourier matrix $F_n$, in particular,
			$P_n =
			F_n^* \mbox{diag}(\lambda_0, \ldots, \lambda_{n-1}) F_n,$
			where  $F_n^*= \frac{1}{\sqrt{n}}[\varphi_0 \ \varphi_1 \ \ldots \  \varphi_{n-1}]$,
			$$\varphi_k = (1,e^{\frac{2\pi ik}{n} },\ldots, e^{\frac{2\pi ikj}{n} } \ldots, e^{\frac{2\pi ik(n-1)}{n}} )^{\rm T}\qquad (k\in \{0, \ldots, n-1\})$$
			and $\lambda_k$ are given by (\ref{wartosciwlasne})
			(see, \emph{e.g.}, \cite[Theorem 3.2.2]{Dav}). Thus, each $\varphi_k$ is a (complex) eigenvector of $P_n$ corresponding to the eigenvalue $\lambda_k$. Since both the real and imaginary parts of $\varphi_k$ are eigenvectors of $P_n$ too, we conclude that $u_k^{(j)}= \cos\left( \frac{2\pi kj}{n} \right) $ and  $v_k^{(j)}= \sin\left( \frac{2\pi kj}{n} \right) $ are the coordinates of the (real) eigenvectors $u_k$ and $v_k$ corresponding to $\lambda_k$ for  $k,j \in \{0, \ldots, n-1\}$.
			\item[ii)]
			However, it is to be noted that neither of the systems $(u_k)_{k=0}^{n-1}$, $(v_k)_{k=0}^{n-1}$ spans $\mathbb{R}^n$. As sums of eigenvectors corresponding to the same eigenvalue, if non-zero, are still eigenvectors, let us consider the eigenvector $u_k + v_k$ corresponding to $\lambda_k$. Evidently, 
			$$ u_k^{(j)}+v_k^{(j)} = \sqrt{2} \sin\left( \frac{2\pi kj}{n}+\frac{\pi}{4} \right). $$
			We then define the vectors $w_k$ by
			$$w_k^{(j)}= \sqrt{\frac{2}{n}} \sin\left( \frac{2\pi kj}{n}+\frac{\pi}{4} \right)\qquad (k, j \in \{0, \ldots, n-1\}).$$ Conspicuously, $w_k$ is a (real) eigenvector of $P_n$ corresponding to $\lambda_k$. \smallskip 
			
			In order to show that $(w_k)_{k=0}^{n-1}$ are pairwise orthogonal unit vectors, we invoke the identities $2\sin(x)\sin(y) = \cos(x-y) - \cos(x+y)$ and $\cos\left(x + \frac{\pi}{2}\right) = - \sin(x)$. Using these we arrive at
			\begin{align*}
			\left< w_k, w_l\right> =& \frac{2}{n} \sum_{j=0}^{n-1} \sin\left( \frac{2 \pi k j}{n} + \frac{\pi}{4} \right) \sin\left( \frac{2 \pi l j}{n} + \frac{\pi}{4} \right) \\
			=&  \frac{1}{n} \left( -1 + \sum_{j=0}^n \cos\left( \frac{2 \pi j(k-l)}{n} \right)\right) + \frac{1}{n}  \sum_{j=0}^n \sin\left( \frac{2 \pi j(k+l)}{n} \right),
			\end{align*}
			where $\left< \cdot, \cdot\right>$ denotes the standard inner product in $\mathbb{R}^n$. 
			
			Let $m$ be a positive integer. Then
			$\sum_{j=0}^n \cos\left( \frac{2 \pi m j}{n}\right)$ and
			$\sum_{j=0}^n \sin \left( \frac{2 \pi m j}{n}\right)$ are the real and imaginary parts, respectively, of the expression
			$$ \sum_{j=0}^n e^{i \frac{2 \pi m j}{n}} = \frac{ e^{2i \pi \left(m+  \frac{m}{n}\right)}-1}{e^{2i \pi \frac{m}{n}}-1} = 1$$
			respectively. Consequently, $\langle w_k, w_l\rangle = \delta_{k,l}$, where $\delta_{k, l}$ denotes the Kronecker delta, and thus $(w_k)_{k=0}^{n-1}$ forms an orthonormal basis of $\mathbb{R}^n$.
		\end{itemize}
	\end{proof}
	
	Before we proceed, a piece of notation is required. Let $Q$ be the matrix whose columns are precisely the eigenvectors from the statement of Lemma~\ref{wk}, that is, $Q = [w_0 \ w_1 \ \ldots \ w_{n-1} ]$.
	For a diagonal matrix $\Lambda = \mbox{diag}(\lambda_0 = 1, \lambda_1, \ldots, \lambda_{n-1})$ with $\lambda_k \in [-1,1]$ ($k \in \{1, \ldots, n-1\}$) we consider the product matrix
	\begin{equation}\label{eqn: RWABS} P(\Lambda):= Q \Lambda Q^{\rm T} = \sum_{j=0}^{n-1} \lambda_{j}w_j w_j^T.\end{equation}
	
	\begin{remark}
		We note that the matrix $P(\Lambda)$ is symmetric and its each row adds up to $1$ since $P(\Lambda) w_0 = w_0$. 
	\end{remark}
	Now we present the general form of the entries of $Q \Lambda Q^T$, with an arbitrary orthogonal $Q$ so that 
	$Q = [q_0 \ q_1 \ \ldots \ q_{n-1}]$
	with $q_0 = \frac{1}{\sqrt{n}} (1, \ldots, 1)^T$ and $q_k \in \mathbb{R}^n$ for each $k=1, \ldots, n-1$.
	\begin{remark}
		Let us denote the entry of $Q$ in the $k$\textsuperscript{th} row and $l$\textsuperscript{th} column by
		$q_{l}^{(k)}$ for $j, k \in \{0, \ldots, n-1\}$. 
		Note that $Q = [q_l^{(k)}]_{k,l=0}^{n-1}$ and  $Q\Lambda = [\lambda_{l} q_l^{(k)} ]_{k,l}$, where $\lambda_0 = 1$. 
		Hence,
		\begin{align} \label{eqn: mat_gen_entry}
		(Q\Lambda Q^T)_{k,l} =&  \sum_{j=0}^{n-1} \lambda_{j}q_j^{(k)}q_{j}^{(l)} 
		= \frac{1}{\sqrt{n}}q_0^{(k)}+ \sum_{j=1}^{n-1} \lambda_{j}q_j^{(k)}q_{j}^{(l)}.
		\end{align}
		Now let $Q$ be a matrix whose columns are precisely the eigenvectors of $P_n$ from the statement of Lemma~\ref{wk}, namely, $Q = [w_0 \ w_1 \ \ldots \ w_{n-1} ]$, then
		\begin{align}\label{eqn: mat_ent} q_j^{(k)} = w_j^{(k)}= \sqrt{\frac{2}{n}} \sin\left( \frac{2\pi kj}{n}+\frac{\pi}{4} \right)\qquad (k, j \in \{0, \ldots, n-1\}),
		\end{align}
		in particular, $q_0^{(k)} = w_0^{(k)} = \frac{1}{\sqrt{n}}$. 
	\end{remark}
	To summarise let us record the form of the entries of the matrix $P(\Lambda)$ in a lemma:
	\begin{lemma}
		\label{lemma: matrixform}
		We have $P(\Lambda)=[p_{kl}]_{k,l=0}^{n-1}$, where
		$$p_{kl} = \frac{1}{n}\left(1+ 2 \sum_{j=1}^{n-1}\lambda_j  \sin\left( \frac{2\pi kj}{n}+\frac{\pi}{4} \right) \sin\left( \frac{2\pi lj}{n}+\frac{\pi}{4} \right)\right)\qquad (k, l \in \{0,\ldots n-1\}).$$
	\end{lemma}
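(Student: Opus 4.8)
The plan is to compute the $(k,l)$ entry of $P(\Lambda) = Q\Lambda Q^{\rm T}$ directly from its definition, feeding in the explicit coordinates of the columns $w_0, \ldots, w_{n-1}$ of $Q$ that are supplied by Lemma~\ref{wk}(ii). Writing the matrix product as a row-times-column sum, or equivalently using the rank-one expansion $Q\Lambda Q^{\rm T} = \sum_{j=0}^{n-1} \lambda_j w_j w_j^{\rm T}$ recorded in \eqref{eqn: RWABS}, one obtains
$$p_{kl} = \sum_{j=0}^{n-1} \lambda_j\, w_j^{(k)} w_j^{(l)} \qquad (k,l \in \{0, \ldots, n-1\}).$$
This is precisely the content of \eqref{eqn: mat_gen_entry} with $q_j = w_j$.

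Next I would single out the term $j = 0$. Since $\lambda_0 = 1$ and, by Lemma~\ref{wk}(ii) (or \eqref{eqn: mat_ent}), $w_0^{(k)} = w_0^{(l)} = 1/\sqrt{n}$, this term contributes exactly $1/n$. For the remaining indices $j = 1, \ldots, n-1$ I substitute $w_j^{(k)} = \sqrt{2/n}\,\sin\!\big(2\pi kj/n + \pi/4\big)$ and likewise for $w_j^{(l)}$; multiplying the two normalisation constants produces a factor $2/n$, so each such summand equals $\tfrac{2}{n}\,\lambda_j \sin\!\big(2\pi kj/n + \pi/4\big)\sin\!\big(2\pi lj/n + \pi/4\big)$. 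Collecting everything and factoring $1/n$ out of the whole expression gives exactly the asserted formula for $p_{kl}$.

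There is no genuine obstacle here: the lemma is a bookkeeping consequence of Lemma~\ref{wk} together with the two remarks preceding the statement. The only points requiring a little care are the indexing convention (the columns of $Q$ are labelled $0, \ldots, n-1$, with the Perron column $w_0$ carrying the eigenvalue $\lambda_0 = 1$, so the ``constant'' part of every entry comes from that single term) and the bookkeeping of the normalising constant $\sqrt{2/n}$, whose square yields the coefficient $2$ inside the parentheses. Symmetry of $P(\Lambda)$ and the row sums being $1$ have already been observed, so nothing further needs to be checked.
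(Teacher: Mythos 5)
Your argument is correct and is exactly the paper's: the authors prove this lemma by citing \eqref{eqn: mat_gen_entry} and \eqref{eqn: mat_ent}, i.e.\ by expanding $p_{kl}=\sum_{j}\lambda_j w_j^{(k)}w_j^{(l)}$, isolating the $j=0$ term as $1/n$, and substituting the explicit sine coordinates with the $\sqrt{2/n}$ normalisation squared giving the factor $2/n$. Nothing to add.
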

	\begin{proof}
		Follows from (\ref{eqn: mat_gen_entry}) and (\ref{eqn: mat_ent}).
	\end{proof}
	\begin{proposition} \label{final}	Let $P(\Lambda)$ be as in \eqref{eqn: RWABS}.
		The matrix $P(\Lambda)$ is doubly stochastic if and only if
		\begin{equation}
		\label{eqn: inequality}
		\sum_{j=1}^{n-1}\lambda_j \sin\left( \frac{2\pi kj}{n}+\frac{\pi}{4} \right) \sin\left( \frac{2\pi lj}{n}+\frac{\pi}{4} \right) \geqslant -\frac{1}{2}
		\end{equation}
		for all $k \in \{0,\ldots n-1\}$ and $l \in \{k, \ldots, n-1\}$.
	\end{proposition}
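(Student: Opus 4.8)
The plan is to reduce double stochasticity of $P(\Lambda)$ to the entrywise non-negativity of its entries, and then to read off the claimed inequality directly from the closed form supplied by Lemma~\ref{lemma: matrixform}.

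First I would record two structural facts. The matrix $P(\Lambda) = Q\Lambda Q^{\mathrm T}$ is symmetric (this is already noted in the remark preceding Lemma~\ref{lemma: matrixform}), and all of its row and column sums equal $1$. The row-sum claim follows because the all-ones vector $\mathbf 1$ equals $\sqrt{n}\,w_0$; since $Q$ is orthogonal with $w_0$ as its zeroth column, $Q^{\mathrm T} w_0 = e_0$, whence $P(\Lambda) w_0 = Q\Lambda e_0 = \lambda_0 Q e_0 = Q e_0 = w_0$ using $\lambda_0 = 1$, and multiplying by $\sqrt n$ yields $P(\Lambda)\mathbf 1 = \mathbf 1$. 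By symmetry the column sums also equal $1$. Consequently $P(\Lambda)$ is doubly stochastic if and only if $p_{kl}\geqslant 0$ for all $k,l\in\{0,\ldots,n-1\}$.

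Next I would substitute the expression for $p_{kl}$ from Lemma~\ref{lemma: matrixform}. Because $p_{kl} = \tfrac1n\bigl(1 + 2\sum_{j=1}^{n-1}\lambda_j \sin(\tfrac{2\pi kj}{n}+\tfrac\pi4)\sin(\tfrac{2\pi lj}{n}+\tfrac\pi4)\bigr)$ and $n>0$, the inequality $p_{kl}\geqslant 0$ is equivalent to $\sum_{j=1}^{n-1}\lambda_j \sin(\tfrac{2\pi kj}{n}+\tfrac\pi4)\sin(\tfrac{2\pi lj}{n}+\tfrac\pi4)\geqslant -\tfrac12$, which is precisely \eqref{eqn: inequality}. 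Finally, since the left-hand side of \eqref{eqn: inequality} is invariant under interchanging $k$ and $l$, it is enough to impose the inequality for $l\geqslant k$, which gives the index range stated in the proposition.

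I do not expect a genuine obstacle here; the only point needing a little care is verifying that the row sums of $P(\Lambda)$ are in fact equal to $1$ (not merely constant), for which the identity $P(\Lambda) w_0 = w_0$ together with $\lambda_0 = 1$ is the decisive ingredient.
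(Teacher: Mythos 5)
Your proposal is correct and follows essentially the same route as the paper: double stochasticity reduces to entrywise non-negativity (using the symmetry of $P(\Lambda)$ and the identity $P(\Lambda)w_0 = w_0$ noted in the remark after \eqref{eqn: RWABS}), and the inequality then falls out of the entry formula in Lemma~\ref{lemma: matrixform}. You merely spell out the row-sum verification and the $k\leftrightarrow l$ symmetry in more detail than the paper does.
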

	\begin{proof}
		The matrix $P(\Lambda)$ is doubly-stochastic if and only if 
		$p_{kl} \geqslant 0$
		for all $k \in \{0,\ldots n-1\}$ and $l \in \{k, \ldots, n-1\}$ and this is indeed equivalent to \eqref{eqn: inequality} by Lemma \ref{lemma: matrixform}.
	\end{proof}
	Finally, the main result  follows directly from the next corollary to Proposition~\ref{final}.

	\begin{corollary}\label{glowne}
		Let $P(\Lambda)$ be as in \eqref{eqn: RWABS}.
		\begin{enumerate}
			\item Suppose that $\lambda_i \leqslant 0$ for all $i \in \{1, \ldots, n-1\}$. Then $P(\Lambda)$ is doubly stochastic as long as
			\begin{equation}\label{eqn: sumlam}\sum_{i=1}^{n-1}\lambda_i \geqslant -\frac{1}{2}.\end{equation} 
			\item  Suppose that $\lambda_i \geqslant 0$ for all $i \in \{1, \ldots, n-1\}$. Then $P(\Lambda)$ is doubly stochastic as long as
			\begin{equation}\label{eqn: sumlam2}\sum_{i=1}^{n-1}\lambda_i \leqslant \frac{1}{2}.\end{equation} 
		\end{enumerate}
	\end{corollary}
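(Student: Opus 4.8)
The plan is to deduce both parts directly from Proposition \ref{final} by estimating the trigonometric sum in \eqref{eqn: inequality} from below (resp. above) uniformly in $k,l$. The key observation is that for fixed $k,l$ the product of sines that appears can be rewritten, via $2\sin(x)\sin(y) = \cos(x-y) - \cos(x+y)$, as
\begin{equation*}
2\sin\!\Big(\tfrac{2\pi kj}{n}+\tfrac{\pi}{4}\Big)\sin\!\Big(\tfrac{2\pi lj}{n}+\tfrac{\pi}{4}\Big) = \cos\!\Big(\tfrac{2\pi (k-l)j}{n}\Big) - \cos\!\Big(\tfrac{2\pi (k+l)j}{n}+\tfrac{\pi}{2}\Big) = \cos\!\Big(\tfrac{2\pi (k-l)j}{n}\Big) + \sin\!\Big(\tfrac{2\pi (k+l)j}{n}\Big),
\end{equation*}
using $\cos(x+\tfrac{\pi}{2}) = -\sin(x)$. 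Thus the left-hand side of \eqref{eqn: inequality} equals $\tfrac12\sum_{j=1}^{n-1}\lambda_j\big(\cos(\tfrac{2\pi(k-l)j}{n}) + \sin(\tfrac{2\pi(k+l)j}{n})\big)$, and each summand $\cos(\cdot)+\sin(\cdot)$ lies in the interval $[-\sqrt2,\sqrt2]$; more crudely, it lies in $[-2,2]$, but in fact the sharper bound we need is simply $|\cos(\theta)+\sin(\theta)| \leqslant \sqrt 2 < 2$ — however, to land exactly on the constant $-\tfrac12$ we only need that each term is at least $-1$ and, crucially, that when $\lambda_j \leqslant 0$ we may replace $\cos(\tfrac{2\pi(k-l)j}{n}) + \sin(\tfrac{2\pi(k+l)j}{n})$ by its maximum value.

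For part (1): since every $\lambda_j \leqslant 0$, we have $\lambda_j\big(\cos(\tfrac{2\pi(k-l)j}{n}) + \sin(\tfrac{2\pi(k+l)j}{n})\big) \geqslant \lambda_j \cdot \max_\theta(\cos\theta+\sin\theta) \cdot [\text{sign issues}]$; more carefully, because $\lambda_j$ is non-positive, multiplying the inequality $\cos\theta + \sin\theta \leqslant \sqrt 2$ by $\lambda_j$ reverses it, giving $\lambda_j(\cos\theta+\sin\theta) \geqslant \sqrt 2\,\lambda_j$. Hence the left side of \eqref{eqn: inequality} is $\geqslant \tfrac{\sqrt2}{2}\sum_{j=1}^{n-1}\lambda_j$. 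This is not quite enough to reach $-\tfrac12$ from the hypothesis $\sum\lambda_j\geqslant -\tfrac12$, so instead I would use the trivial bound $\cos\theta+\sin\theta \geqslant -2$ is too weak as well; the right move is to bound the two sums separately. Write the left side of \eqref{eqn: inequality} as $\tfrac12\big(\sum_j\lambda_j\cos(\tfrac{2\pi(k-l)j}{n}) + \sum_j\lambda_j\sin(\tfrac{2\pi(k+l)j}{n})\big)$. For the sine sum, pair the index $j$ with $n-j$: since $\sin(\tfrac{2\pi(k+l)(n-j)}{n}) = -\sin(\tfrac{2\pi(k+l)j}{n})$ while $\lambda_j$ need not equal $\lambda_{n-j}$, this does not vanish in general — but it does show the sine sum is controlled. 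Actually the cleanest route: bound $\big|\sum_j\lambda_j\sin(\cdot)\big| \leqslant \sum_j|\lambda_j|\cdot 1$ and similarly replace $\cos(\cdot)$ by $1$ where $\lambda_j\leqslant 0$ helps and by $-1$ where it hurts. I would organize it as: left side of \eqref{eqn: inequality} $= \tfrac12\sum_j\lambda_j\cos(\tfrac{2\pi(k-l)j}{n}) + \tfrac12\sum_j\lambda_j\sin(\tfrac{2\pi(k+l)j}{n}) \geqslant \tfrac12\sum_j\lambda_j\cdot 1 + \tfrac12\sum_j\lambda_j\cdot 1$? No — for the sine term the worst case over $k,l$ is achieved, so one genuinely needs $\sin$ replaced by values in $[-1,1]$, and one uses $\lambda_j\sin(\cdot)\geqslant \lambda_j$ only when $\sin(\cdot)\leqslant 1$, which always holds, combined with $\lambda_j\leqslant 0$ giving $\lambda_j\sin(\cdot)\geqslant\lambda_j$. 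Wait: $\lambda_j\leqslant 0$ and $\sin(\cdot)\leqslant 1$ give $\lambda_j\sin(\cdot)\geqslant\lambda_j\cdot 1 = \lambda_j$. Good. Likewise $\cos(\cdot)\leqslant 1$ gives $\lambda_j\cos(\cdot)\geqslant\lambda_j$. Therefore the left side of \eqref{eqn: inequality} is $\geqslant \tfrac12\sum_j\lambda_j + \tfrac12\sum_j\lambda_j = \sum_{j=1}^{n-1}\lambda_j \geqslant -\tfrac12$ by \eqref{eqn: sumlam}, which is exactly the inequality required by Proposition \ref{final}. Part (2) is symmetric: when all $\lambda_j\geqslant 0$, the inequalities reverse, so $\lambda_j\cos(\cdot)\geqslant -\lambda_j$ and $\lambda_j\sin(\cdot)\geqslant -\lambda_j$, whence the left side of \eqref{eqn: inequality} is $\geqslant -\sum_j\lambda_j \geqslant -\tfrac12$ by \eqref{eqn: sumlam2}; again Proposition \ref{final} applies and $P(\Lambda)$ is doubly stochastic.

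The main obstacle — really the only subtlety — is making sure the bound on the off-diagonal entries is genuinely uniform in the pair $(k,l)$ and that the constant works out to precisely $-\tfrac12$ rather than something weaker like $-1$; this forces the split into the $\cos$-part and the $\sin$-part and the observation that the factor $\tfrac12$ in front of each (coming from the product-to-sum identity) is exactly what converts the hypothesis $\sum\lambda_j\geqslant -\tfrac12$ into the conclusion. I would also remark that the main theorem follows at once: given a $1/2$-Sule\u{\i}manova spectrum $\sigma=(1,\lambda_1,\dots,\lambda_{n-1})$ (in the $0$-indexed convention) we have $\lambda_j\leqslant 0$ and $\sum_{j=1}^{n-1}\lambda_j = \big(1+\sum_j\lambda_j\big) - 1 \geqslant \tfrac12 - 1 = -\tfrac12$, so part (1) yields a symmetric doubly stochastic matrix $P(\Lambda)$ with spectrum $\sigma$, since $P(\Lambda)=Q\Lambda Q^{\mathrm T}$ is an orthogonal conjugation of $\Lambda$.
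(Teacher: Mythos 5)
Your final argument is correct and is essentially the paper's proof: the paper simply observes that $S_j(k)S_j(l)\leqslant 1$ (resp.\ $\geqslant -1$), so that $\lambda_j S_j(k)S_j(l)\geqslant\lambda_j$ (resp.\ $\geqslant-\lambda_j$) and the sum is bounded below by $\sum_j\lambda_j\geqslant-\tfrac12$ (resp.\ $-\sum_j\lambda_j\geqslant-\tfrac12$), then applies Proposition~\ref{final}. Your detour through the product-to-sum identity yields exactly the same uniform bound and could be cut, along with the several abandoned false starts, without loss.
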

	\begin{proof}
		First we show (1). Assume that (\ref{eqn: sumlam}) holds
		and set
		$S_{j}(k) = \sin\left( \frac{2\pi kj}{n}+\frac{\pi}{4} \right)$
		for any $j \in \{1, \ldots, n-1\}$ and $k\in \{0, \ldots, n-1\}$. Clearly, $S_j(k)S_j(l) \leqslant 1$ for all $k,l \in \{0,\ldots n-1\}$ and since all $\lambda_j \leqslant 0$, we have
		$\lambda_j S_j(k)S_j(l) \geqslant \lambda_j$. Hence, we arrive at the estimate
		\begin{align*}
		\sum_{j=1}^{n-1} \lambda_j  S_j(k)S_j(l) \geqslant&    \sum_{j=1}^{n-1} \lambda_j  \\ \geqslant& -\frac{1}{2}.
		\end{align*}
		To show (2), assume (\ref{eqn: sumlam2}) and use the fact that $S_j(k)S_j(l) \geqslant -1$.
	\end{proof}
	\begin{remark}
		Note that 
		$$0\leqslant \delta_{\min}:=\min\left\{ \sum_{j=0}^{n-1}\lambda_j \colon  \sum_{j=1}^{n-1}\lambda_j S_j(k)S_j(l) \geqslant -\frac{1}{2} \mbox{ for all }k, l \in \{0,\ldots n-1\} \right\} \leqslant \frac{1}{2}.$$
		For the upper bound, let us note that $S_j(k) = 1$ if and only if $n$ is divisible by $8$ and $kj = n \left(\frac{1}{8} + m\right)$
		for some $m \in \mathbb{N} \cup \{0\}$. Thus if $n$ is not a multiple of $8$ then $\delta_{\min} < \frac{1}{2}$.\smallskip 
		
		\noindent For the lower bound we found that if $n=5$ then for  $\sigma = (1, -0.004, -0.002, -0.004, -0.51) $ so that $\sum_{j=0}^{4} \lambda_j = 0.48$ \smallskip 
		we have a negative element in the third row, third column, namely $P(\Lambda)_{22} \approx -0.0005 <0$. These suggest that $\delta_{\min} \in (0.48, 0.5)$.\smallskip
		
		We leave the task of finding the exact value of $\delta_{\min}$ an \emph{open problem}. 
	\end{remark}
	\begin{remark}\label{rem: household}
		One may observe that our matrix $Q$ is orthogonal and symmetric. Therefore, one may try to find different orthogonal symmetric matrix so that Problem \ref{problem} has a~solution for $\delta \in (0, \frac{1}{2})$. This could be achieved by finding a constant $M \in (1, 2)$ that is an~upper bound for the term $n q_j^{(k)}q_j^{(l)}$ in equation \eqref{eqn: mat_gen_entry} for each $j$, $k$, $l \in \{0, 1, \ldots, n-1\}$.\smallskip
		
		A natural example of a matrix that is both orthogonal and symmetric, is a \emph{Householder matrix}, \emph{i.e.},
		\begin{align*}H(v) =& I - 2 \frac{vv^T}{\|v\|^2}\\
		=& 
		\left[\begin{array}{cccccc}
		\beta & \beta & \beta & \ldots & \beta & \beta \\
		\beta & 1-\alpha & -\alpha & \ldots & -\alpha & -\alpha \\
		\beta & - \alpha & 1- \alpha & \ddots & - \alpha & -\alpha\\
		\ldots & \ldots & \ddots & \ddots &\ddots & \ldots \\
		\beta & -\alpha &-\alpha & \ddots & 1-\alpha & - \alpha \\
		\beta & -\alpha &-\alpha & \ldots & -\alpha & 1- \alpha 
		\end{array}\right]
		\end{align*}
		so that $v = (1- \sqrt{n}, 1, \ldots, 1)^T$, $\alpha=\frac{1}{\sqrt{n}(\sqrt{n}-1)}$ and $\beta = \frac{1}{\sqrt{n}}$ (see \cite[Section 4.2]{Zhu}). 
		However, given that $\lambda_j \leqslant 0$ for each $j=1, \ldots, n-1$, the matrix $H(v)\Lambda H(v)$, as $n$ becomes large fails to have non-negative entries as
		\begin{align*}
		(H(v)\Lambda H(v))_{k,k} = \frac{1}{n} + \alpha^2 \sum_{j=1 \ \& \ j\neq k}^{n-1} \lambda_j + (1-\alpha)^2 \lambda_k.
		\end{align*}
		
	\end{remark}
	\section{Examples and applications}
	\subsection*{Examples} In this section we provide some examples of Sule\u{\i}manova spectra, $$\sigma_n = (1, \lambda_2, \ldots, \lambda_n),$$ for which $\lambda_2, \ldots, \lambda_n$ add up to $-\frac{1}{2}$ and, thus, yield symmetric doubly stochastic matrices obtained via our construction (\ref{eqn: RWABS}), but do not satisfy any known sufficient conditions (\emph{e.g.},  
	(\ref{eqn: permir}), (\ref{eqn: soules}), \emph{etc.}) to obtain symmetric doubly stochastic matrices.\smallskip
	
	To wit, neither (\ref{eqn: permir}) nor (\ref{eqn: soules}) is satisfied for
	\begin{itemize}
		\item  $\sigma_5 = (1, -0.02, -0.03, -0.05, -0.4)$ (odd dimension);
		\item  $\sigma_6 = (1,-0.01, -0.02, -0.06, -0.08, -0.33)$ (even dimension),
	\end{itemize}
	respectively.
	
	Let $\sigma_n = (1, \lambda_2, \ldots, \lambda_n)$, where $\lambda_2 \geqslant \lambda_3 \ldots \geqslant \lambda_n$.
	The \emph{improved Soules' condition} when $n$ is even, \cite[Theorem 3, Notation 1, Observation 1]{nader}, that is, 
	$$\frac{1}{n} + \frac{1}{n}\lambda_2 + \frac{\frac{n}{2}- \left[\frac{n+2}{4} \right]}{\frac{n}{2}\left[\frac{n+2}{4} \right]}\lambda_4+\sum_{k=1}^{\left[\frac{n+2}{4} \right]-1}\frac{\lambda_{n-4k+4}}{k(k+1)} \geqslant 0$$
	is not satisfied as witnessed by $$\sigma_{10} = (1, -0.01,  -0.01,  -0.025, -0.03,  -0.035, -0.04,  -0.05,  -0.08,  -0.22 )$$
	(the square brackets in the above formula denote the integral part of a real number).\smallskip
	
	Let $n$ be odd, \emph{new condition 1} (\cite[Theorem 4, Notation 1]{nader}; we adapt the naming conventions from the said paper), that is, 
	$$\frac{1}{n} + \frac{n-1}{n(n+1)}\lambda_2 + \frac{\frac{n+1}{2}- \left[\frac{n+3}{4} \right]}{\frac{n+1}{2}\left[\frac{n+3}{4} \right]}\lambda_4+\sum_{k=1}^{\left[\frac{n+3}{4} \right]-1}\frac{\lambda_{n-4k+4}}{k(k+1)} \geqslant 0$$
	is not satisfied as witnessed by the 1/2-Sule\u{\i}manova spectrum $$\sigma_5 =(1,-0.03, -0.03, -0.04, -0.4).$$
	
	Next we give examples of spectra that do not satisfy \emph{New condition 2} (\cite[Theorem 5, Notation 2]{nader})
	which is given by (\ref{eqn: newcond3.0}, \ref{eqn: newcond3.2}, \ref{eqn: newcond3.3}, \ref{eqn: newcond3.1}) depending on the remainder $n \mod 4$. 
	Let  $m$ be an integer greater than 1. If
	\begin{enumerate}
		\item $n=4m$, then
		\begin{equation}\label{eqn: newcond3.0}\frac{1}{n}+\frac{1}{n}\lambda_2+\frac{2}{n}\lambda_4 + \frac{\frac{n}{4} - \left[\frac{n+4}{8}\right]}{\frac{n}{4} \left[\frac{n+4}{8}\right]}\lambda_8 +\sum_{k=1}^{\left[\frac{n+4}{8}\right]-1}\frac{\lambda_{n-8k+8}}{k(k+1)} \geqslant 0
		\end{equation}
		is not satisfied by the 1/2-Sule\u{\i}manova spectrum $$\sigma_{16} = (1, -0.003,  -0.003,  -0.004,  -0.007,  -0.009,  -0.02,   -0.0209,-0.021,$$ $$  -0.024,
		-0.026,  -0.035,  -0.042,  -0.076,  -0.0811, -0.128);$$
		\item $n=4m+2$, then
		\begin{equation}\label{eqn: newcond3.2}\frac{1}{n}+\frac{1}{n}\lambda_2+\frac{2(n-2)}{n(n+2)}\lambda_4 + \frac{\frac{n+2}{4} - \left[\frac{n+6}{8}\right]}{\frac{n+2}{4} \left[\frac{n+6}{8}\right]}\lambda_8 +\sum_{k=1}^{\left[\frac{n+6}{8}\right]-1}\frac{\lambda_{n-8k+8}}{k(k+1)} \geqslant 0 \end{equation}
		is not satisfied by the 1/2-Sule\u{\i}manova spectrum $$\sigma_{10}=(1, -0.01, -0.01, -0.01, -0.02, -0.02, -0.04, -0.07, -0.1  ,-0.22);$$
		\item $n=4m+3$, then
		\begin{equation}\label{eqn: newcond3.3}\frac{1}{n}+\frac{n-1}{n(n+1)}\lambda_2+\frac{2}{n+1}\lambda_4 + \frac{\frac{n+1}{4} - \left[\frac{n+5}{8}\right]}{\frac{n+1}{4} \left[\frac{n+5}{8}\right]}\lambda_8 +\sum_{k=1}^{\left[\frac{n+5}{8}\right]-1}\frac{\lambda_{n-8k+8}}{k(k+1)} \geqslant 0 \end{equation}
		is not satisfied by the 1/2-Sule\u{\i}manova spectrum $$\sigma_{11} = (1, -0.001, -0.004, -0.01,  -0.01,  -0.012, -0.013, -0.05,  -0.09,  -0.11,  -0.2 );$$
		\item $n=4m+1$, then
		\begin{equation}\label{eqn: newcond3.1}\frac{1}{n}+\frac{n-1}{n(n+1)}\lambda_2+\frac{2(n-1)}{(n+1)(n+3)}\lambda_4 + \frac{\frac{n+3}{4} - \left[\frac{n+7}{8}\right]}{\frac{n+3}{4} \left[\frac{n+7}{8}\right]}\lambda_8 +\sum_{k=1}^{\left[\frac{n+7}{8}\right]-1}\frac{\lambda_{n-8k+8}}{k(k+1)} \geqslant 0 \end{equation}
		is not satisfied by the 1/2-Sule\u{\i}manova spectrum $$\sigma_9 = (1, -0.006, -0.018, -0.02,  -0.028, -0.028, -0.053, -0.105, -0.242).$$
	\end{enumerate}
	\emph{New condition 3} (\cite[Conjecture 1, Example 1]{nader}) that for $n=26$ takes the form
	$$\frac{1}{26}+\frac{1}{26}\lambda_2 + \frac{6}{13\cdot 7}\lambda_4 + \frac{3}{28}\lambda_8+\frac{1}{4}\lambda_{16}+\frac{1}{2}\lambda_{26}\geqslant 0$$
	is not satisfied by the 1/2-Sule\u{\i}manova spectrum $$\sigma_{26} = (1, \underbrace{-0.004}_{\lambda_2}, -0.005, \underbrace{-0.006}_{\lambda_4}, -0.007, -0.01,  -0.01,  \underbrace{-0.011}_{\lambda_8}, -0.011, -0.011, -0.012,$$
	$$-0.012, -0.015, -0.015, -0.016, \underbrace{-0.017}_{\lambda_{16}}, -0.019, -0.02,  -0.022, -0.022, -0.025,
	-0.028,$$ $$-0.028, -0.032, -0.069, \underbrace{-0.073}_{\lambda_{26}}).$$
	
	\subsection*{Applications to random generation of doubly stochastic matrices} Let $n \in \mathbb{N}$ and $\alpha \in \left[-\frac{1}{2}, \frac{1}{2}\right]$. Our construction provides a simple way to randomly generate symmetric doubly stochastic matrices via their spectra. Namely, 
	let $X_1, \ldots, X_{n-1}$ be independent random variables having probability distributions supported on $[0, 1]$ and let us consider $S_n := X_1+ \ldots + X_{n-1}$. Set
	$$ \lambda_i = \alpha \frac{X_i}{S_n}\quad (i \in \{1, \ldots, n-1\}). $$ 
	Then $\sigma = (1, \lambda_1, \ldots, \lambda_{n-1})$
	is a spectrum of a symmetric doubly stochastic matrix and the corresponding matrix may be obtained via (\ref{eqn: RWABS}).
	More algorithms to generate doubly stochastic matrices (not necessarily symmetric) can be found in \cite{capel}.
	For an elaborate discussion on spectral properties of random doubly stochastic matrices we refer the reader to \cite{ng}.
	
	\subsection*{Supplementary material} We supplemnt the material with a Python code organised in a  \texttt{JupyterNotebook} available at \begin{center}\url{https://github.com/Nty24/DoublyStochasticMatricesGenerator}\end{center} that generates further examples and counterexamples in the spirit of Section 3. 
	
	\subsection*{Acknowledgements} The first-named author would like to thank his colleagues, James Burridge, for the idea to investigate the eigenvectors of symmetric random walks, and Ittay Weiss, for great discussions that led to Remark \ref{rem: household}. The authors would like to thank Pietro Paparella for  clarifying the history of the results on realising normalised  Sule\u{\i}manova spactra. M.G.'s visit to Prague in August 2019, during which some part of the project was completed, was supported by funding received from GA\v{C}R project 19-07129Y; RVO 67985840, which is acknowledged with thanks. The work was supported by the Iuventus Plus grant IP2015 035174.

\end{document}